\newtheorem*{Maintheorem*}{Main Theorem}
\newtheorem*{theorem*}{Theorem}
\newtheorem{theorem}{Theorem}
\newtheorem{definition}{Definition}
\newtheorem{lemma}{Lemma}
\newtheorem{corollary}{Corollary}
\newcommand{\A}{\mathcal{A}}
\newcommand{\C}{\mathbb{C}}
\newcommand{\la}{\alpha}
\newcommand{\lb}{\overline{\beta}}
\newcommand{\tr}{\operatorname{tr}}
\newcommand{\ovl}{\overline}
\newcommand{\dbar}{\bar \partial}
\newcommand{\dl}{ \partial}
\renewcommand\>{\rangle}
\newcommand{\X}{\mathcal{X}}
\newcommand{\jbar}{\ovl{\jmath}}
\newcommand{\lbar}{\ovl{l}}
\newcommand{\Abar}{\ovl{A}}
\newcommand{\Bbar}{\ovl{B}}
\newcommand{\id}{\operatorname{id}}
\newcommand{\laplace}{\Box_\partial}    
\newcommand{\laplacedbar}{\Box_{\dbar}}
\begin{document}

\title{Twisted Hodge filtration: Curvature of the determinant}

\author{Philipp Naumann}
\address{Fachbereich Mathematik und Informatik,
Philipps-Universit\"at Marburg, Lahnberge, Hans-Meerwein-Straße, D-35032
Marburg,Germany}
\email{naumann@mathematik.uni-marburg.de}

\thanks{}

\subjclass[2000]{32L10, 32G05, 14Dxx}

\keywords{Curvature of higher direct image sheaves, Deformations of complex structures, Families, Fibrations, Twisted Hodge filtration}

\date{}

\begin{abstract}
Given a holomorphic family $f:\mathcal{X} \to S$ of compact complex manifolds and a relative ample line bundle $L\to \X$, the higher direct images $R^{n-p}f_*\Omega^p_{\X/S}(L)$ carry a natural hermitian metric. An explicit formula for the curvature tensor of these direct images is given in \cite{Na16}. We prove that the determinant of the twisted Hodge filtration $F^p_L=\oplus_{i\geq p}R^{n-i}\Omega^i_{\X/S}(L)$ is (semi-) positive on the base $S$ if $L$ itself is (semi-) positive on $\X$.
\end{abstract}

\maketitle

\section{introduction}
For an ample line bundle $L$ on a compact complex manifolds $X$ of dimension $n$, the cohomology groups $H^{n-p}(X,\Omega^p_X(L))$ are critical with respect to Kodaira-Nakano vanishing. More generally, we consider the higher direct image sheaves $R^{n-p}f_*\Omega^p_{\X/S}(L)$ for a proper holomorphic submersion $f: \X \to S$ of complex manifolds and a line bundle $L \to \X$, which is positive along the fibers $X_s=f^{-1}(s)$. The understanding of this situation has applications to moduli problems. An explicit curvature formula for these higher direct images is given in \cite{Na16}. In general, the curvature of the intermediate higher direct images contains positive and negative contributions. Therefore, we consider the twisted Hodge filtration 
$$
F^p_L:=\oplus_{i\geq p}{R^{n-i}\Omega^i_{\X/S}(L)}
$$
This allows us to offset the negative terms by the positive ones. The result is that $\det(F^p_L)$ is (semi-) positive on the base $S$ if $L$ is (semi-) positive and relatively ample on $\X$. This idea comes from \cite{Sch16}.

\section{Differential geometric setup and proof of the result}
First we recall the setting from \cite{Na16}.
Let $f: \X \to S$ be a proper holomorphic submersion and $(L,h)$ a line bundle on $\X$. The curvature form of the hermitian line bundle is given by
$$
\omega_{\X}:=2\pi \cdot c_1(L,h)=-\sqrt{-1}\dl\dbar\log h.
$$ 
We consider the case where the hermitian bundle $(L,h)$ is relative ample, which means that
$$
\omega_{X_s}:=\omega|_{X_s}
$$
are K\"ahler forms on the $n$-dimensional fibers $X_s$. Then one has the notion of a horizontal lift $v_s$ of a tangent vector $\dl_s$ on the base $S$ and we get a representative of the Kodaira-Spencer class 
$$
A_s:=\dbar(v_s)|_{X_s}.
$$
Note that we have $L_{v_s}(\omega_{X_s})^n=0$ (see \cite{Be11}). Furthermore, one sets
$$
\varphi:=\<v_s,v_s\>_{\omega_{\X}},
$$
which is called the geodesic curvature. The coherent sheaf $R^{n-p}f_*\Omega^p_{\X/S}(L)$ is locally free on $S$ outside a proper subvariety. In the case $n=p$ and $L$ ample, the sheaf $f_*(K_{\X/S}\otimes L)$ is locally free by the Ohsawa-Takegoshi extension theorem (see \cite{Be09}). We assume the local freeness of
$$
R^{n-p}f_*\Omega^p_{\X/S}(L)
$$
in the general case, hence the statement of the Grothendieck-Grauert comparison theorem holds. Now Lemma 2 of \cite{Sch12} applies, which says that we can represent local sections of $R^{n-p}f_*\Omega^p_{\X/S}(L)$ by $\dbar$-closed $(p,n-p)$-forms on the total space, whose restrictions to the fibers are harmonic. Let $\{\psi^1,\ldots,\psi^r\}$ be a local frame of the direct image consisting of such sections around a fixed point $s \in S$. We denote by $\{(\dl/\dl s_i)|i=1,\ldots,\dim S\}$ a  basis of the complex tangent space $T_sS$ of $S$ over $\C$, where $s_i$ are local holomorphic coordinates on $S$. The natural inner product is given by
$$
(\psi^k,\psi^l) = \int_X{\psi^k_{A_p\Bbar_q}\psi^{\lbar}_{C_q\ovl{D}_p}g^{\ovl{D}_pA_p}g^{\Bbar_{q}C_q}h \;g\,dV},
$$
Let $A_{i\lb}^{\la}(z,s)\dl_{\la}dz^{\lb}=\dbar(v_i)_{X_s}$ be the $\dbar$-closed representative of the Kodaira-Spencer class of $\dl_i$ described above. Then the cup product together with contraction defines
\begin{eqnarray}
A_{i\lb}^{\la}\dl_{\la}dz^{\lb}\cup \; : \A^{0,n-p}(X_s,\Omega^p_{X_s}(L|_{X_s})) \to \A^{0,n-p+1}(X_s,\Omega^{p-1}_{X_s}(L|_{X_s})) \label{A_p}\\
A_{\jbar\la}^{\lb}\dl_{\lb}dz^{\la}\cup \; : \A^{0,n-p}(X_s,\Omega^p_{X_s}(L|_{X_s})) \to \A^{0,n-p-1}(X_s,\Omega^{p+1}_{X_s}(L_{X_s}))\label{A_p*} 
\end{eqnarray}  
where $p>0$ in (\ref{A_p}) and $p<n$ in (\ref{A_p*}). Note that this is a formal analogy to the derivative of the period map in the classical case (see \cite{Gr70}). We will apply the above cup products to harmonic $(p,n-p)$-forms. In general, the results are not harmonic. 

When applying the Laplace operator to $(p,q)$-forms with values in $L$ on the fibers $X_s$, we have 
\begin{equation}
\label{BKN}
\laplace - \laplacedbar = (n-p-q)\cdot \id
\end{equation}
due to the definition $\omega_{X_s}=\omega_{\X}|_{X_s}$ and the Bochner-Kodaira-Nakano identity. Thus, we write $\Box=\laplace=\laplacedbar$ in the case $q=n-p$. By considering an eigenfunction decomposition and using the identity (\ref{BKN}), we obtain that all eigenvalues of 
$\Box$ are $0$ or greater than $1$, hence the operator $(\Box-1)^{-1}$ exists. We use the notation 
$\psi^{\lbar}:=\ovl{\psi^l}$ for sections $\psi^l$ and write $g\,dV =\omega_{X_s}/n!$. The result from \cite{Na16} is 
\begin{theorem}
\label{Thm}
Let $f: \X \to S$ be a proper holomorphic submersion and $(L,h) \to \X$ a relative ample line bundle. With the objects described above, the curvature of $R^{n-p}f_*\Omega_{\X/S}^p(L)$ is given by 
\begin{eqnarray*}
R_{i\jbar}^{\lbar k}(s) = &&\int_{X_s}{\varphi_{i\jbar}\cdot(\psi^k\cdot\psi^{\lbar})\,g\,dV}\\
&+& \int_{X_s}{(\Box +1)^{-1}(A_i \cup \psi^k)\cdot(A_{\jbar}\cup\psi^{\lbar})\,g\,dV}\\
&+& \int_{X_s}{(\Box -1)^{-1}(A_i \cup \psi^{\lbar})\cdot(A_{\jbar}\cup\psi^{k})\,g\,dV}\\
\end{eqnarray*}
If $L\to \X$ is ample, the only contribution, which may be negative, originates from the harmonic parts in the third term
$$
-\int_{X_s}{H(A_I\cup\psi^{\lbar})\ovl{H(A_j\cup\psi^{\ovl{k}})}\,g\,dV}.
$$ 
\end{theorem}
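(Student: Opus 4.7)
The plan is to compute the Chern curvature of the induced $L^{2}$-metric on the direct image bundle by a direct fiberwise geometric calculation, in the spirit of Berndtsson's arguments for the case $p=n$ and Schumacher's extension to arbitrary $p$. Fix $s_{0}\in S$ and choose the local frame $\{\psi^{k}\}$ of $\dbar$-closed $(p,n-p)$-forms with harmonic fiber restrictions described in the statement, normalised at $s_{0}$ so that the metric tensor $H_{k\lbar}(s)=(\psi^{k},\psi^{\lbar})_{s}$ satisfies $H_{k\lbar}(s_{0})=\delta_{kl}$ and $\partial_{i}H_{k\lbar}(s_{0})=0$. Under this normalisation the Chern curvature reduces to $R_{i\jbar k\lbar}(s_{0})=-\partial_{i}\partial_{\jbar}H_{k\lbar}(s_{0})$, so the task is simply to compute this second derivative.

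I would differentiate in the base directions using the horizontal lifts $v_{i}$ of $\partial/\partial s_{i}$ with respect to $\omega_{\X}$, chosen so that $\dbar v_{i}|_{X_{s}}=A_{i}$ is the Kodaira--Spencer representative. Cartan's formula $L_{v_{i}}=\iota_{v_{i}}d+d\iota_{v_{i}}$ together with Stokes' theorem on the compact fibers converts $\partial_{i}H_{k\lbar}$ into fiber integrals of expressions involving $L_{v_{i}}\psi^{k}$, $L_{v_{i}}h$ and $L_{v_{i}}(g\,dV)$. Iterating with $\vbar_{j}$, the mixed derivatives produce cup products of the form $A_{i}\cup\psi^{k}\in\A^{0,n-p+1}(X_{s},\Omega_{X_{s}}^{p-1}(L))$ and its dual counterpart from (\ref{A_p*}), which in general are not harmonic, while differentiating $h$ twice along the horizontal lifts yields contributions proportional to $\omega_{\X}(v_{i},\vbar_{j})=\varphi_{i\jbar}$.

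The three qualitative contributions then separate as follows. The geodesic-curvature term comes directly from $-\partial_{i}\partial_{\jbar}\log h$ evaluated on the horizontal lifts, giving the first integral. The remaining two integrals arise from the non-harmonic parts of $\partial_{i}\psi^{k}$: decomposing this derivative into its harmonic piece plus an inverse-Laplacian contribution, and exploiting that $A_{i}\cup\psi^{k}$ is $\dbar$-closed but not $\dbar$-coclosed, one is led to an inversion problem for $\Box$ on the orthogonal complement of the harmonics. The cup products appearing in the second and third integrals are of the two different types described in (\ref{A_p}) and (\ref{A_p*}); the specific shifts in $(\Box+1)^{-1}$ and $(\Box-1)^{-1}$ emerge from commuting $\Box$ past these cup-product operators via the twisted Bochner--Kodaira--Nakano identity (\ref{BKN}) applied in the respective shifted bidegrees, and the sign flip between the two terms reflects this difference of bidegree. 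The harmonic component $H(A_{i}\cup\psi^{\lbar})$ does not lie in the image of $\Box-1$, so after Hodge projection it survives in the third integral as the distinguished term $-|H(A_{i}\cup\psi^{\lbar})|^{2}$, which is the only contribution whose sign is not manifestly determined when $L$ is (semi-)positive.

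The main obstacle is the careful bookkeeping of the Green-operator commutations with the cup-product operators and the K\"ahler identities for the twisted bundle $(L,h)$ that produce the precise shifts $\pm 1$. Once these are controlled, the positivity assertion of the last part of the theorem follows immediately: under $\omega_{\X}\geq 0$ one has $\varphi_{i\jbar}\geq 0$, and the integrals of $(\Box+1)^{-1}$ and $(\Box-1)^{-1}$ applied to $(\eta,\eta)$-type pairings are non-negative on the non-harmonic sector, leaving $-|H(A_{i}\cup\psi^{\lbar})|^{2}$ as the sole potentially negative piece.
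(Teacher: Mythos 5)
A preliminary remark: the paper does not actually prove this theorem --- it is imported wholesale from \cite{Na16} (``The result from \cite{Na16} is\ldots''), so there is no internal argument to compare yours against line by line. That said, your outline does reproduce the strategy of \cite{Na16} and of the earlier computation in \cite{Sch12}: normal frame at $s_0$, reduction of the curvature to $-\partial_i\partial_{\jbar}H_{k\lbar}(s_0)$, differentiation of the fiber integrals via Lie derivatives along the horizontal lifts $v_i$, and a harmonic/non-harmonic splitting of the resulting terms handled by Green's operators. The identification of the geodesic-curvature term and of the two cup products (\ref{A_p}), (\ref{A_p*}) as the sources of the second and third integrals is correct.

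The genuine gap is that the quantitative heart of the theorem --- the precise operators $(\Box+1)^{-1}$ and $(\Box-1)^{-1}$ --- is both deferred (``once these are controlled'') and attributed to a mechanism that cannot produce it. The cup product (\ref{A_p}) lands in bidegree $(p-1,n-p+1)$ and (\ref{A_p*}) in bidegree $(p+1,n-p-1)$; in both cases $n-p'-q'=0$, so the identity (\ref{BKN}) ``applied in the respective shifted bidegrees'' gives $\laplace=\laplacedbar$ there and yields no shift at all. The $\pm1$ must instead be extracted from commutator identities between the fiberwise Laplacian and the contraction with $A_i$ (equivalently, from the K\"ahler identities for $(L,h)$ interacting with the cup-product operators and with $\partial\dbar$ of the geodesic curvature); this is exactly the bookkeeping you postpone, and without it one cannot verify the signs, which are the entire content of the statement: the semipositivity application rests on the second term being a $(\Box+1)^{-1}$-pairing (hence nonnegative) while only the harmonic part of the third term survives with a minus sign. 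A smaller slip: a harmonic form $\eta$ certainly does lie in the image of $\Box-1$, since $(\Box-1)(-\eta)=\eta$; the correct statement is that $1$ is not an eigenvalue of $\Box$ (the eigenvalues are $0$ or $>1$), so $(\Box-1)^{-1}$ exists and acts as $-\id$ on the harmonic subspace, and this is what produces the displayed term $-\int_{X_s}{H(A_i\cup\psi^{\lbar})\ovl{H(A_j\cup\psi^{\kbar})}\,g\,dV}$.
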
 
From now on, we assume that the base space $S$ is one dimensional. Let $\dl_s$ be a tangent vector and $A=A_s$ the corresponding 
$\dbar$-closed Dolbeault representative of the Kodaira-Spencer class. By considering an eigenvector decomposition, we have the following estimate for the curvature of $R^{n-p}\Omega^p_{\X/S}(L)$ for a semi-positive, relatively ample line bundle $L$ (see also \cite[Corollary 5]{Sch16}):
\begin{corollary}
\begin{equation}
\label{TwGri}
R(A,\Abar,\psi,\ovl{\psi})\geq ||H(A\cup \psi)||^2 - ||H(\Abar \cup \psi)||^2
\end{equation}
\end{corollary}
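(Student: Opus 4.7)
The plan is to apply Theorem \ref{Thm} directly with $\dim S = 1$, $i=j=s$, and $k=l$, so that $\psi^k=\psi$, $\psi^{\lbar}=\bar\psi$, and $A_s=A$. The curvature $R(A,\Abar,\psi,\bar\psi)$ then splits as the sum of the three fiberwise integrals of Theorem \ref{Thm}, and the strategy is to bound each summand from below: the geodesic-curvature integral by $0$, the $(\Box+1)^{-1}$-integral by $||H(A\cup\psi)||^2$, and the $(\Box-1)^{-1}$-integral by $-||H(\Abar\cup\psi)||^2$; adding these three lower bounds produces (\ref{TwGri}).

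For the geodesic-curvature term $\int_{X_s}\varphi_{s\sbar}|\psi|^2\,g\,dV$, I would observe that in the notation introduced above $\varphi_{s\sbar}=\langle v_s,v_s\rangle_{\omega_{\X}}=\omega_{\X}(v_s,\bar v_s)$, i.e.\ the value of the $(1,1)$-form $\omega_{\X}$ on the horizontal lift $v_s$. Semi-positivity of $L$ is equivalent to $\omega_{\X}\geq 0$ as a Hermitian form on $T\X$, hence $\omega_{\X}(v_s,\bar v_s)\geq 0$ pointwise on $\X$ and the first integral is non-negative.

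For the remaining two terms I would use the spectral decomposition of the fiberwise Laplacian $\Box$. By the spectral gap noted after (\ref{BKN}), the spectrum of $\Box$ on the relevant spaces of $(p,n-p)$-, $(p-1,n-p+1)$- and $(p+1,n-p-1)$-forms with values in $L|_{X_s}$ consists of $0$ together with eigenvalues $\mu>1$. Expanding $A\cup\psi=\sum_\mu\alpha_\mu$ in eigencomponents of $\Box$ (so that $\alpha_0=H(A\cup\psi)$), one gets
\[
\int_{X_s}(\Box+1)^{-1}(A\cup\psi)\cdot\overline{(A\cup\psi)}\,g\,dV=\sum_\mu\frac{||\alpha_\mu||^2}{\mu+1}\;\geq\; ||H(A\cup\psi)||^2,
\]
since $1/(\mu+1)$ equals $1$ at $\mu=0$ and is strictly positive for $\mu>1$. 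Writing similarly $A\cup\bar\psi=\sum_\mu\beta_\mu$ and using $\overline{A\cup\bar\psi}=\Abar\cup\psi$, one finds
\[
\int_{X_s}(\Box-1)^{-1}(A\cup\bar\psi)\cdot(\Abar\cup\psi)\,g\,dV=\sum_\mu\frac{||\beta_\mu||^2}{\mu-1}\;\geq\;-||H(\Abar\cup\psi)||^2,
\]
because $1/(\mu-1)=-1$ on the harmonic eigenspace and is strictly positive for $\mu>1$, while harmonic projection commutes with complex conjugation so $||H(A\cup\bar\psi)||=||H(\Abar\cup\psi)||$.

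Summing the three bounds gives exactly (\ref{TwGri}). The only delicate point, in my view, is keeping the cup-product conventions of (\ref{A_p}) and (\ref{A_p*}) straight so that the identification $\overline{A\cup\bar\psi}=\Abar\cup\psi$ is correctly justified; once that is in place, both spectral estimates reduce to elementary non-negativity of the individual eigenspace contributions, and the geodesic-curvature term is immediately handled by the semi-positivity of $\omega_{\X}$.
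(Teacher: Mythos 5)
Your argument is correct and is exactly the one the paper intends: the corollary is stated with only the remark ``by considering an eigenvector decomposition,'' and your spectral expansion of the $(\Box+1)^{-1}$ and $(\Box-1)^{-1}$ terms (using the gap that all nonzero eigenvalues exceed $1$), together with the non-negativity of the geodesic curvature term under semi-positivity of $\omega_{\X}$, is precisely the omitted computation. No discrepancy with the paper's approach.
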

Now we formalize the situation:
The cup products (\ref{A_p}) and (\ref{A_p*}) give rise to cup products for cohomology classes ($\psi$ is assumed to be harmonic).
\begin{eqnarray}
A_p\;  : H^{n-p}(X_s,\Omega^p_{X_s}(L|_{X_s})) \to H^{n-p+1}(X_s,\Omega^{p-1}_{X_s}(L|_{X_s})) \quad ; \quad [\psi] \mapsto 
[H(A\cup \psi)]
\label{A}\\
\Abar_p \; : H^{n-p}(X_s,\Omega^p_{X_s}(L|_{X_s})) \to H^{n-p-1}(X_s,\Omega^{p+1}_{X_s}(L_{X_s})) \quad ; \quad [\psi] \mapsto 
[H(\Abar\cup \psi)]
\label{Abar} 
\end{eqnarray}  
We have (\cite[Lemma 5]{Sch16})
\begin{lemma}
$\Abar_{p}=(A_{p+1})^*$, i.e. the map $\Abar_{p}$ is the adjoint of the map $A_{p+1}$.
\end{lemma}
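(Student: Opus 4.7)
The plan is to reduce the cohomological adjointness to a pointwise $L^2$-adjointness of bundle maps on smooth forms, and then verify that identity by a coordinate calculation.

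First, recall that the inner products on $H^{n-p}(X_s, \Omega^p_{X_s}(L|_{X_s}))$ and $H^{n-p-1}(X_s, \Omega^{p+1}_{X_s}(L|_{X_s}))$ are computed on the unique harmonic representative in each class, and $H$ is the orthogonal $L^2$-projection onto harmonic forms. Thus, for harmonic $\psi \in \A^{0,n-p}(X_s, \Omega^p_{X_s}(L))$ and harmonic $\eta \in \A^{0,n-p-1}(X_s, \Omega^{p+1}_{X_s}(L))$, one has
$$
([H(A\cup\eta)], [\psi]) = (A\cup\eta, \psi) \quad \text{and} \quad ([\eta], [H(\Abar\cup\psi)]) = (\eta, \Abar\cup\psi),
$$
since in each case one factor is already harmonic. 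Consequently the statement $\Abar_p = (A_{p+1})^*$ reduces to proving the $L^2$-adjointness
$$
(A\cup\eta, \psi) = (\eta, \Abar\cup\psi)
$$
of the cup products at the level of smooth $L$-valued forms.

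Next, the operators $A\cup$ and $\Abar\cup$ are $\Oh_{X_s}$-linear bundle maps, so they commute with the fiber metric $h$ on $L$, which drops out of the verification. In local coordinates the cup with $A=A^\la_{\lb}\,\dl_\la\otimes dz^{\lb}$ is the pointwise composition of interior multiplication $\dl_\la \ctr$ on the $\Omega^p$-slot with the wedge $dz^{\lb}\wedge$ on the $\Omega^{0,*}$-slot, weighted by $A^\la_{\lb}$; its complex conjugate $\Abar\cup$ wedges $dz^\beta$ onto the $\Omega^p$-slot and contracts $\dl_{\ovl{\la}}$ on the $\Omega^{0,*}$-slot, weighted by $\overline{A^\la_{\lb}}$. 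With respect to the fiber K\"ahler metric $g$, interior multiplication by $\dl_\la$ is the pointwise hermitian adjoint of wedging by $dz^\la$, and analogously on the antiholomorphic side. Combining these two pointwise adjoint relations with the complex conjugation $A\leftrightarrow \Abar$ on the coefficients produces exactly the identity required after expanding the inner product.

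Concretely, I would substitute the local expansions of $\eta$, $\psi$, $A$, $\Abar$ into the coordinate expression for the $L^2$ inner product recalled in Section 2 and check that both $(A\cup\eta, \psi)$ and $(\eta, \Abar\cup\psi)$ collapse to the same tensor contraction of $A^\la_{\lb}$, the components of $\eta$ and $\ovl{\psi}$, and the metric tensors $g^{-1}$ and $h$. The only obstacle is the careful bookkeeping of the signs produced by permuting multi-indices under the interior products and wedges and by the raised-index conventions; no analytic input beyond the harmonic Hodge decomposition on the fibers is needed.
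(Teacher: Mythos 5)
Your proposal takes essentially the same route as the paper: its one-line proof is precisely your reduction $(H(\Abar\cup\psi),\chi)=(\Abar\cup\psi,\chi)=(\psi,A\cup\chi)=(\psi,H(A\cup\chi))$, using that $H$ is the orthogonal $L^2$-projection and that one factor on each side is already harmonic, with the middle pointwise identity simply asserted. One caveat about your coordinate verification: the two tensor contractions coincide not by raw index bookkeeping alone but because the lowered Kodaira--Spencer tensor $g_{\gamma\bar\alpha}A^{\gamma}_{\bar\beta}$ is symmetric in $\bar\alpha,\bar\beta$ (a consequence of $v_s$ being the horizontal lift with respect to the closed form $\omega_{\X}$), so that input should be made explicit.
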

\begin{proof}
For any harmonic $\chi \in \A^{(p,n-p)}(X_s,L|_{X_s})$ we have $(H(\Abar \cup \psi),\chi) = (\Abar \cup \psi,\chi)=(\psi,A\cup \chi)=(\psi,H(A\cup \chi))$.
\end{proof}
In the spirit of the Hodge filtration we define for $0\leq p\leq n$
\begin{definition}
$$
F^p_L:=\bigoplus_{i\geq p}{R^{n-i}\Omega^i_{\X/S}(L)}
$$
\end{definition}
Then we have
\begin{theorem}
The line bundle $\det(F^p_L)$ is (semi-) positive on $S$, if $L$ is (semi-) positive and relatively ample on $\X$.
\end{theorem}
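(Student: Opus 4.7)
The plan is to write the Chern curvature of $\det(F^p_L)$ as the sum over $i\geq p$ of the traces of the curvatures of the summands $R^{n-i}f_*\Omega^i_{\X/S}(L)$, use the estimate \eqref{TwGri} to isolate the potentially negative contributions, and then exploit the adjointness relation $\Abar_i=(A_{i+1})^*$ so that these negative terms cancel telescopically against the positive terms coming from the next summand.

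More concretely, around a fixed point $s\in S$ I would pick for each $i\geq p$ an orthonormal frame $\{\psi^k_i\}_k$ of $R^{n-i}f_*\Omega^i_{\X/S}(L)$ of the type furnished by Lemma~2 of \cite{Sch12}, so that Theorem~\ref{Thm} applies. Summing the pointwise estimate \eqref{TwGri} over this basis gives a lower bound for the curvature of $\det R^{n-i}f_*\Omega^i_{\X/S}(L)$ of the form
\begin{equation*}
\sum_k\int_{X_s}\varphi_{s\ovl s}\,|\psi^k_i|^2\,g\,dV \;+\; \sum_k\|H(A\cup\psi^k_i)\|^2 \;-\; \sum_k\|H(\Abar\cup\psi^k_i)\|^2.
\end{equation*}
The last two sums are the operator traces $\tr(A_i^*A_i)$ and $\tr(\Abar_i^*\Abar_i)$, both computed on $H^{n-i}(X_s,\Omega^i_{X_s}(L|_{X_s}))$.

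The key move is then to apply the previous lemma, $\Abar_i=(A_{i+1})^*$, together with the elementary identity $\tr(BB^*)=\tr(B^*B)$, in order to rewrite the negative term for summand $i$ as $\tr(A_{i+1}^*A_{i+1})$ on $H^{n-i-1}(X_s,\Omega^{i+1}_{X_s}(L|_{X_s}))$, which is exactly the positive term coming from summand $i+1$. Summing the resulting lower bound over $i\geq p$, with the boundary convention $A_{n+1}=0$ (since $\Abar_n$ is undefined), the $A$-contributions telescope to leave only $\tr(A_p^*A_p)\geq 0$. Geometrically this uncompensated residue is $A_p$ mapping the bottom piece $H^{n-p}(\Omega^p)$ of $F^p_L$ out of the filtration, into $F^{p-1}_L$, which is precisely why the filtration has to start at level $p$.

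Finally, under the hypothesis that $L$ is (semi-)positive, $\omega_{\X}\geq 0$ forces $\varphi_{s\ovl s}\geq 0$ (strict when $L$ is positive), so the $\varphi$-integrals contribute non-negatively (resp.\ positively), and the combined lower bound on the curvature of $\det(F^p_L)$ is non-negative (resp.\ positive). The main technical obstacle is the careful index bookkeeping in the adjointness relation and in the telescoping sum, together with checking that the boundary case $i=n$ correctly terminates the cascade; no new analytic input beyond Theorem~\ref{Thm} and the estimate \eqref{TwGri} is required.
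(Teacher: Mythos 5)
Your proposal is correct and follows essentially the same route as the paper: sum the estimate \eqref{TwGri} over a harmonic frame of each summand, identify the sums as $\tr(A_i^*A_i)$ and $\tr(\Abar_i^*\Abar_i)=\tr(A_{i+1}A_{i+1}^*)=\tr(A_{i+1}^*A_{i+1})$ via the adjointness lemma, and let the terms telescope across the filtration. The paper leaves the telescoping implicit after computing the two traces for a single level $p$; you spell it out (including the vanishing at $i=n$ and the role of the residual $\tr(A_p^*A_p)$), which is a faithful elaboration rather than a different argument.
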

\begin{proof}
We choose a local holomorphic frame $\{\psi^{\la}\}\subset \A^{(p,n-p)}(X_s,L|_{X_s})$ of harmonic forms and compute
\begin{eqnarray*}
\sum_{\la}{||H(A\cup \psi^{\la})||^2} &=& \sum{(H(A\cup \psi^{\la}),H(A \cup \psi^{\la}))}\\
&=& \sum{(H(\Abar\cup A \cup \psi^{\la}),\psi^{\la})}\\
&=& \sum{((A_p^*\circ A_p)\psi^{\la},\psi^{\la})}\\
&=& \tr (A_p^*A_p)
\end{eqnarray*}
Analogously
\begin{eqnarray*}
\sum_{\la}{||H(\Abar\cup \psi^{\la})||^2} &=& \sum{(H(\Abar\cup \psi^{\la}),H(\Abar \cup \psi^{\la}))}\\
&=& \sum{(H(A \cup \Abar \cup \psi^{\la}),\psi^{\la})}\\
&=& \sum{((\Abar_p^*\circ \Abar_p)\psi^{\la},\psi^{\la})}\\
&=& \tr (\Abar_p^*\Abar_p)\\
&=& \tr(A_{p+1}A_{p+1}^{*}),
\end{eqnarray*}
because of Lemma 1.
Note that we have $\tr(A_p^*A_p)=\tr(A_pA_p^*)$. Now the result follows from Corollary 1, since the curvature of the determinant of a bundle is the trace of the curvature form of the bundle.
\end{proof}

\end{document}